\begin{document}
\def\K{\mathbb{K}}
\def\R{\mathbb{R}}
\def\C{\mathbb{C}}
\def\Z{\mathbb{Z}}
\def\Q{\mathbb{Q}}
\def\D{\mathbb{D}}
\def\N{\mathbb{N}}
\def\T{\mathbb{T}}
\def\P{\mathbb{P}}
\def\A{\mathscr{A}}
\def\CC{\mathscr{C}}
\renewcommand{\theequation}{\thesection.\arabic{equation}}
\newtheorem{theorem}{Th\'eor\`eme}[section]
\newtheorem{cond}{C}
\newtheorem{lemma}{Lemme}[section]
\newtheorem{corollary}{Corollaire}[section]
\newtheorem{prop}{Proposition}[section]
\newtheorem{definition}{D\'efinition}[section]
\newtheorem{remark}{Remarque}[section]
\newtheorem{example}{Exemple}[section]
\bibliographystyle{plain}

\title{R\'esolution du $\partial\bar\partial$ pour les courants prolongeables d\'efinis dans un anneau}
\author[ E.\  Bodian \& I.\  Hamidine \\  \& S.\  Sambou ]
{Eramane Bodian \& Ibrahima Hamidine\\  \& Salomon Sambou}

\address{D\'epartement de Math\'ematiques\\UFR des Sciences et Thechnologies \\ Universit\'e Assane Seck de Ziguinchor, BP: 523 (S\'en\'egal)}

\email{m.bodian2966@zig.univ.sn \& i.hamidine5818@zig.univ.sn \& ssambou@univ-zig.sn }

\subjclass{}

\date{\today}

\maketitle
\begin{abstract}
Dans ce papier, on r\'esout d'abord le $\partial\bar\partial$ pour les courants prolongeables d\'efinis dans $\C^n$ priv\'e d'une boule $B$ de $\C^n$, ensuite dans une vari\'et\'e analytique complexe $X$, on le r\'esout pour un domaine $D=X\setminus\bar{\Omega}$, o\`u $\Omega$ est un domaine born\'e de $X$ d\'efini par $\{z\in X\,\ /\,\ \varphi(z)<0\}$, (avec $\varphi$ une fonction d'exhaustion  strictement plurisousharmonique).
\vskip 2mm
\noindent
{\normalsize A}{\tiny BSTRACT.} In this present paper, we first solve  the $\partial\bar\partial$ for  extendable currents defined in  $\C^n\setminus B$, where $B$ is a ball  of $\C^n$, then in a analytic complex manifold  $X$,  and in a domain $D=X\setminus\bar{\Omega}$ where $\Omega$ is a bounded domain of $X$ defined by $\{z\in X\,\ /\,\ \varphi(z)<0\}$, ($\varphi$ is an exhaustion strictly plurisubharmonic function).   
\vskip 2mm
\noindent
\keywords{{\bf Mots cl\'es:} Courants prolongeables, $\partial\bar\partial$, cohomologie de de Rham.}
\vskip 1.3mm
\noindent
\textit{Classification math\'ematique 2010~:}  32F32.
\end{abstract}

\section*{Introduction}

Soit $B\subset\C^n$ la boule unit\'e, on se pose la question suivante~: si $T$ est un courant prolongeable $d$-ferm{\'e} sur $\C^n\setminus \bar B$, existe-t-il un courant prolongeable sur $\C^n\setminus \bar B$ tel que $\partial \bar{\partial} u = T$?
\vskip 1mm
\noindent
Tenant compte de consid{\'e}rations classiques, nous devons pour r{\'e}pondre {\`a} cette question, avoir {\`a} r{\'e}soudre  l'{\'e}quation
\begin{equation}\label{Equa1}
d u =T,
\end{equation}
 o{\`u} $T$ est un courant prolongeable, la solution obtenue se d{\'e}compose sans perte de g\'en\'eralit\'es en une partie $\partial$-ferm{\'e}e et l'autre $\bar{\partial}$-
ferm{\'e}e.  $\C^n\setminus \bar B$ a les conditions g\'eom\'etriques n\'ecessaires \`a la r\'esolution du $\partial$ et $\bar \partial$ pour les courants prolongeables (voir \cite{Samb}).  Partant de r{\'e}sultats
connus de cohomologie de de Rham et de l'analogue convexe (voir \cite{SBD}), alors l'{\'e}quation
\eqref{Equa1} admet une solution. La r{\'e}solution du $\partial \bar{\partial}$
devient alors une cons{\'e}quence des r{\'e}sultats de r{\'e}solution du
$\bar{\partial}$ pour les courants prolongeables obtenus dans \cite{Samb}. Dans le cas d'une vari\'et\'e, on introduit la notion d'extension contractile et on r\'esout le $\partial\bar{\partial}$ dans ce cadre.
\section{Pr{\'e}liminaires et notations}
Soit $B$ la boule  de $\C^n$.
\begin{definition}$~~ $\\
Un courant $T$ d\'efini sur $\C^n\setminus \bar B$ est dit prolongeable s'il existe un courant $\check{T}$ d\'efini sur $\C^n$ tel que $\check{T}_{|(\C^n\setminus \bar B)}=T$.
\end{definition}
   D'apr\`es Martineau \cite{Mart}, puisque $\stackrel{\circ}{(\overline{\C^n\setminus \bar B})}=\C^n\setminus \bar B$,  les courants prolongeables de degr\'e $p$ sur $\C^n\setminus \bar B$ sont \'egaux au dual topologique des $(2n - p)$-formes diff{\'e}rentielles de classe $\CC^{\infty}$ sur $\C^n$ \`a support compact sur $\C^n\setminus B$. On note $\check{\mathcal{D}}'^p(\C^n\setminus \bar B)$ l'espace des  $p$-courants d\'efinis sur $\C^n\setminus \bar B$ et prolongeables \`a $\C^n$, $\A_c^p
(\C^n\setminus \bar B)$ les $p$-formes diff{\'e}rentielles de classe
$\CC^{\infty}$ sur $\C^n$ \`a support compact dans $\C^n\setminus \bar{B}$. Sur $\C^n$, on note $\check{\mathcal{D}}'^{p, q}
(\C^n\setminus \bar{B})$ l'espace des $(p, q)$-courants prolongeables d{\'e}finis sur
$\C^n\setminus \bar{B}$ et $\A_c^{p, q} (\C^n\setminus \bar{B})$ l'espace des $(p, q)$-formes
diff{\'e}rentielles {\`a} support compact dans $\C^n\setminus \bar{B}$. On note $\check{{\rm H}}^p(\C^n\setminus \bar{B})$ le $p^{{\rm ieme}}$ groupe de cohomologie de de Rham des courants
prolongeables d{\'e}finis sur $\C^n\setminus \bar{B}$, $\check{{\rm H}}^{p, q} (\C^n\setminus \bar{B})$ le $(p,q)^{{\rm ieme}}$ groupe de cohomologie de Dolbeault des courants prolongeables
d{\'e}finis sur $\C^n\setminus \bar{B}$. Si $F \subset \C^n$, alors ${\rm H}_{\infty}^p (F)$
d{\'e}signe le $p^{{\rm ieme}}$ groupe de cohomologie de de Rham des
$p$-formes diff{\'e}rentielles de classe $\CC^{\infty}$ d{\'e}finis
sur $\C^n$, ${\rm H}_{\infty, c}^p (\C^n)$ est le groupe de cohomologie de de Rham des
$p$-formes diff{\'e}rentiables de classe $\CC^{\infty}$ sur $\C^n$ {\`a}
support compact et enfin $\A^p (F)$ l'espace des $p$-formes diff{\'e}rentielles de classe $\CC^{\infty}$ sur $F$. On note aussi, pour tout domaine $D$ de $\C^n$, $\flat D$ le bord de $D$.
\section{R\'esolution de l'\'equation $du=T$}
Tout le long de cette section, nous consid\'erons
\[
S=\{z\in\C^n\, ; |z|=1\}
\]
et
\[
B=\{ z\in \C^n\, ; |z|<1\},
\]
 la sph\`ere et la boule unit\'e respectivement dans $\C^n$.
\vskip 1mm
\noindent
Consid\'erons la suite courte suivante, pour $0\leq p\leq 2n$
\[
0\to \A^p(\C^n)\to \A^p(\C^n\setminus \bar B)\oplus\A^p(\bar B)\to \A^p(S)\to 0.
\]
Sur le plan de cohomologie de de Rham, on a la suite longue de cohomologie suivante~:
\begin{equation*}\label{suitelong1}
\begin{split}
& 0 \to {\rm H}^0(\C^n)\to {\rm H}^0(\C^n\setminus \bar B)\oplus {\rm H}^0(\bar B)\to{\rm H}^0(S)\to\\
&  {\rm H}^1(\C^n)\to {\rm H}^1(\C^n\setminus \bar B)\oplus {\rm H}^1(\bar B)\to{\rm H}^1(S)\to \ldots\to \\
& {\rm H}^{2n-1}(\C^n)\to {\rm H}^{2n-1}(\C^n\setminus \bar B)\oplus {\rm H}^{2n-1}(\bar B)\to {\rm H}^{2n-1}(S)\\
&\to {\rm H}^{2n}(\C^n)\to {\rm H}^{2n}(\C^n\setminus \bar B)\oplus {\rm H}^{2n}(\bar B)\to 0
\end{split}
\end{equation*}
On sait que
\[
{\rm H}^p(\C^n\setminus \bar B)={\rm H}^p(\C^n\setminus B).
\]
\begin{remark}[\cite{God}]\label{remark1}
$$
\left\{
  \begin{array}{ll}
    {\rm H}^p(S)=0, & \hbox{ si }\, 1< p < 2n-1\\
    {\rm H}^0(S)={\rm H}^{2n-1}(S)=\R, \\
    {\rm H}^p(\R^{2n})=0, & \hbox{ si }\, p\geq 1\\
    {\rm H}^0(\R^{2n})=\R,  \\
   {\rm H}^p(B)=0, & \hbox{ si }\, p\geq 1\\
    {\rm H}^0(B)=\R.
  \end{array}
\right.
$$
\end{remark}
\begin{theorem}\label{thm1}$~~ $\\
\[ \check{{\rm H}}^j(\C^n\setminus\bar B)=0 \mbox{ pour } 2\leq j\leq 2n-2.
\]
\end{theorem}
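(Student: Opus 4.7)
Le plan est de lire l'annulation cherchée directement dans la suite exacte longue de cohomologie rappelée plus haut, après substitution des valeurs de Remarque~\ref{remark1}, puis de transférer le résultat au niveau des courants prolongeables.

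On fixerait $j$ avec $2\leq j\leq 2n-2$ et on isolerait de la suite exacte longue le fragment
\[
{\rm H}^{j-1}(S)\longrightarrow {\rm H}^{j}(\C^n)\longrightarrow {\rm H}^{j}(\C^n\setminus\bar B)\oplus {\rm H}^{j}(\bar B)\longrightarrow {\rm H}^{j}(S)\longrightarrow {\rm H}^{j+1}(\C^n).
\]
Remarque~\ref{remark1} donne ${\rm H}^{j}(\C^n)={\rm H}^{j+1}(\C^n)={\rm H}^{j}(\bar B)=0$ dès que $j\geq 1$. Dans la plage $2\leq j\leq 2n-2$ (et en supposant $n\geq 2$, condition nécessaire pour que la plage soit non vide), les groupes ${\rm H}^{j-1}(S)$ et ${\rm H}^{j}(S)$ s'annulent aussi: la sphère $S$, de dimension $2n-1\geq 3$, vérifie ${\rm H}^{k}(S)=0$ pour $1\leq k\leq 2n-2$. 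L'exactitude entraîne alors ${\rm H}^{j}(\C^n\setminus\bar B)\oplus {\rm H}^{j}(\bar B)=0$, d'où ${\rm H}^{j}(\C^n\setminus\bar B)=0$.

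Il resterait ensuite à passer de la cohomologie de de Rham des formes $\CC^{\infty}$ à celle des courants prolongeables. Pour cela, on invoquerait un théorème de de Rham adapté aux courants prolongeables: puisque $\C^n\setminus\bar B$ possède les propriétés géométriques requises (voir \cite{Samb}), l'inclusion des formes lisses dans les courants prolongeables induit un isomorphisme en cohomologie. L'annulation $\check{\rm H}^{j}(\C^n\setminus\bar B)=0$ dans la plage annoncée s'en déduit.

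Le point délicat réside dans cette seconde étape, à savoir la justification de l'isomorphisme entre cohomologies de formes lisses et de courants prolongeables; la partie Mayer-Vietoris n'étant qu'une lecture directe de la suite exacte longue après substitution des valeurs de Remarque~\ref{remark1}.
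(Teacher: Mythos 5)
Votre calcul par Mayer--Vietoris est correct, mais il ne porte que sur la cohomologie de de Rham des formes lisses ${\rm H}^j(\C^n\setminus\bar B)$, alors que le th\'eor\`eme concerne la cohomologie $\check{{\rm H}}^j$ des courants \emph{prolongeables}. Toute la difficult\'e est concentr\'ee dans votre seconde \'etape, que vous renvoyez \`a un ``th\'eor\`eme de de Rham adapt\'e aux courants prolongeables'' sans le d\'emontrer ni le localiser dans la litt\'erature. Un tel isomorphisme ne peut pas \^etre simplement invoqu\'e~: le complexe des courants prolongeables sur $\C^n\setminus\bar B$ est le quotient de $\mathcal{D}'^\bullet(\C^n)$ par les courants \`a support dans $\bar B$, et comparer sa cohomologie \`a celle des formes lisses sur $\C^n\setminus\bar B$ exige pr\'ecis\'ement le travail d'analyse fonctionnelle (fermeture de l'image de $d$, dualit\'e avec les formes \`a support compact) que le th\'eor\`eme est cens\'e encapsuler. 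Telle quelle, votre r\'eduction laisse ouvert le point qui constitue l'\'enonc\'e.

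La d\'emonstration du papier ne passe pas du tout par un tel isomorphisme. Elle exploite la dualit\'e de Martineau~: un $p$-courant prolongeable sur $\C^n\setminus\bar B$ est une forme lin\'eaire continue sur $\A_c^{2n-p}(\C^n\setminus B)$. L'ingr\'edient cohomologique n'est pas ${\rm H}^j(\C^n\setminus\bar B)=0$ mais le Lemme \ref{lem1}, c'est-\`a-dire la r\'esolution du $d$ pour les formes \`a support compact dans $\C^n\setminus B$ (qui repose sur ${\rm H}_c^p(\C^n)=0$ et ${\rm H}^{p-1}(\bar B)=0$). Pour $T$ prolongeable et $d$-ferm\'e et $K$ compact, on d\'efinit la forme lin\'eaire $d\varphi\mapsto\langle T,\varphi\rangle$ sur $\A_{c,K}^{2n-p+1}(\C^n\setminus B)\cap d\A_c^{2n-p}(\C^n\setminus B)$, on v\'erifie qu'elle est bien d\'efinie et continue (fermeture de $d\A_c^{2n-p}$, cat\'egorie de Baire, th\'eor\`eme de l'application ouverte), on l'\'etend par Hahn--Banach en un courant prolongeable $S^{(K)}$ v\'erifiant $dS^{(K)}=\pm T$ sur $\stackrel{\circ}{K}$, puis on recolle ces solutions locales le long d'une suite exhaustive de compacts de $\C^n\setminus B$. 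Si vous souhaitez conserver votre strat\'egie, il vous faut d\'emontrer effectivement l'isomorphisme de comparaison entre $\check{{\rm H}}^j$ et ${\rm H}^j$, ce qui revient en pratique \`a refaire l'argument de dualit\'e ci-dessus.
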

Pour d\'emontrer le th\'eor\`eme \ref{thm1}, on a besoin du lemme suivant~:
\begin{lemma}\label{lem1}$~~ $\\
$\A_c^p(\C^n\setminus B)\cap \ker d=d(\A_c^{p-1}(\C^n\setminus B)\big)$  pour $2\leq p\leq 2n-1$.
\end{lemma}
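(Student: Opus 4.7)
The strategy is to extract the vanishing from a long exact sequence in cohomology. The plan is to set up the short exact sequence of differential complexes
\[
0 \longrightarrow \A_c^\bullet(\C^n \setminus B) \longrightarrow \A_c^\bullet(\C^n) \longrightarrow \A^\bullet(\bar B) \longrightarrow 0,
\]
where the first arrow is the inclusion and the second is restriction to $\bar B$, and then read off the desired vanishing from the induced long exact sequence.

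First I would check that this sequence is exact in each degree. The kernel identification is the easy part: a smooth compactly supported form $\varphi$ on $\C^n$ satisfies $\varphi|_{\bar B}=0$ if and only if $\mathrm{supp}(\varphi) \subset \C^n \setminus B$, one implication following from continuity of $\varphi$ together with the density of $B$ in $\bar B$, and the other from the identity $\overline{\C^n \setminus \bar B} = \C^n \setminus B$. The main technical obstacle will be verifying surjectivity of the restriction map $\A_c^p(\C^n) \to \A^p(\bar B)$: given $\mu \in \A^p(\bar B)$, one has to invoke a Seeley/Whitney-type extension theorem for smooth forms on a manifold with boundary to produce a smooth extension of $\mu$ to a neighbourhood of $\bar B$, and then multiply by a plateau function equal to $1$ on $\bar B$ with compact support in $\C^n$ to obtain a preimage in $\A_c^p(\C^n)$.

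Once the short exact sequence is established, the associated long exact sequence in cohomology contains the piece
\[
{\rm H}^{p-1}(\bar B) \longrightarrow {\rm H}^p\bigl(\A_c^\bullet(\C^n \setminus B)\bigr) \longrightarrow {\rm H}^p_{\infty,c}(\C^n).
\]
For $2 \leq p \leq 2n-1$ both outer groups vanish: Remark~\ref{remark1} together with the Poincar\'e lemma on the contractible set $\bar B$ gives ${\rm H}^{p-1}(\bar B) = 0$ since $p-1 \geq 1$, while the classical Poincar\'e lemma with compact supports on $\R^{2n}$ gives ${\rm H}^p_{\infty,c}(\C^n) = 0$ since $p \leq 2n-1$. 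Exactness then forces ${\rm H}^p\bigl(\A_c^\bullet(\C^n \setminus B)\bigr) = 0$, which is exactly the stated equality $\A_c^p(\C^n \setminus B) \cap \ker d = d\bigl(\A_c^{p-1}(\C^n \setminus B)\bigr)$.
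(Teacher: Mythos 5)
Your proof is correct and is in substance the same as the paper's: both rest on exactly the two vanishings ${\rm H}_{\infty,c}^p(\C^n)=0$ for $p\le 2n-1$ and ${\rm H}^{p-1}(\bar B)=0$ for $p\ge 2$, and your long exact sequence for $0\to\A_c^\bullet(\C^n\setminus B)\to\A_c^\bullet(\C^n)\to\A^\bullet(\bar B)\to 0$ is just the formal packaging of the paper's explicit chase (solve $du=f$ with compact support in $\C^n$, then correct $u$ on $\bar B$ by a primitive $v$ and a compactly supported extension $\tilde v$). The one step you flag as technical, surjectivity of the restriction map via Whitney--Seeley extension, has its exact counterpart in the paper's extension of $v$ to $\tilde v$, so the two arguments require the same ingredients.
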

\begin{proof}$~~ $\\
On utilse les r\'esultats suivants:
\[
{\rm H}_c^p(\C^n)=0, \mbox{ si } p \leq 2n-1
\]
\[
{\rm H}_c^{2n}(\C^n)=\R, \mbox{ pour } p=2n.
\]
Si $f\in\A_c^p(\C^n\setminus B)\cap \ker d$, alors $f\in\A_c^p(\C^n)\cap \ker d$ si $1\leq p\leq 2n-1$.
\vskip 1mm
\noindent
${\rm H}_c^{p}(\C^n)=0$, alors il existe $u\in \A_c^{p-1}(\C^n)$ telle que $du=f$.
\vskip 1mm
\noindent
Si $p=1$, $u$ est une $0$-forme diff\'erentielle \`a support compact. Alors $du_{|_B}=0$. Ainsi $u=cst$ sur $B$. Par suite $du_{|_{\C^n\setminus\bar{B}}}\not\in \A_c^p(\C^n\setminus B)$ sauf pour $u$ identiquement nulle.
\vskip 1mm
\noindent
Si $p\geq 2$, $du_{|_B}=0$. Puisque 
\[
{\rm H}^{p-1}(B)={\rm H}^{p-1}(\bar{B})=0, \mbox{ pour } 1\leq p-1;
\]
 i.e; $p\geq 2$, il existe $v\in \A^{p-2}(\bar{B})$ tel que $dv=u$ sur $B$. Posons $\tilde{v}$ une extension \`a support compact dans $\C^n$ de $v$, on a
\[
\tilde{u}=u-d\tilde{v}
\]
qui est un \'el\'ement de $\A_c^{p-1}(\C^n\setminus  B)$ tel que $d\tilde{u}=f$.
\end{proof}
\begin{proof}[D\'emonstration du th\'eor\`eme \ref{thm1}]$~~ $\\
\textit{\'Etape 1:}
Soit $T\in \check{\mathcal{D}'}^p(\C^n\setminus \bar{B})\cap\ker d$, $2\leq p\leq 2n-2.$
\vskip 1mm
\noindent
L'espace $d\A_c^{2n-p}(\C^n\setminus \bar{B})$ est ferm\'e dans $\A_c^{2n-p+1}(\C^n\setminus B)$ pour $2\leq 2n-p+1\leq 2n-1$, (voir par exemple \cite{Samb}, remarque 2). 
\vskip 1mm
\noindent
Pour $K$ un compact de $\C^n\setminus B$, notons 
 $\A_{c,K}^{2n-p+1}(\C^n\setminus B)$ le sous-espace des formes diff\'erentielles appartenant \`a  $\A_c^{2n-p+1}(\C^n\setminus  B)]$ et qui ont leur support dans $K$.
 \vskip 1mm
 \noindent
 L'espace $\A_{c,K}^{2n-p+1}(\C^n\setminus B)\cap d\A_c^{2n-p}(\C^n\setminus  B)$ est ferm\'e dans $\A_{c,K}^{2n-p+1}(\C^n\setminus B)$ qui est un espace de Fr\'echet, par cons\'equent, c'est un espace de Fr\'echet.
\[
\A_{c,K}^{2n-p+1}(\C^n\setminus B)\cap d\A_c^{2n-p}(\C^n\setminus  B)=\bigcup_{\nu\in\N}\big(\A_{c,K}^{2n-p+1}(\C^n\setminus B)\cap d\A_{K_\nu}^{2n-p}(\C^n\setminus B)\big);
\]
 avec $K_\nu=\{z\in\C^n \; |z|\leq R_\nu,\,\ R_\nu\in\R_+^*\}\setminus B $ et $R_\nu > 1$ une suite exhaustive de compacts dans $\C^n\setminus B$. Il existe $\nu_0$ tel que $\A_c^{2n-p+1}(\C^n\setminus B)\cap d\A_{K_{\nu_0}}^{2n-p}(\C^n\setminus B)$ soit de deuxi\`eme cat\'egorie de Baire. L'op\'erateur $d$ est alors un op\'erateur ferm\'e de domaine de d\'efinition
\[
\{ \varphi\in\A_{K_{\nu_0}}^{2n-p}(\C^n\setminus B)| d\varphi\in \A_c^{2n-p+1}(\C^n\setminus B)\}
\]
entre les espaces de Fr\'echet $\A_{K_{\nu_0}}^{2n-p}(\C^n\setminus B)$ et $\A_c^{2n-p+1}(\C^n\setminus B)\cap d\A_c^{2n-p} (\C^n\setminus B)$ dont l'image est de seconde cat\'egorie de Baire. Le th\'eor\`eme de l'application ouverte implique que cet op\'erateur est surjectif et ouvert (voir par exemple \cite{Samb}, Lemme 3.1). Donc
\[
d\A_{c,K_{\nu_0}}^{2n-p}(\C^n\setminus B)\cap \A_{c,K}^{2n-p+1}(\C^n\setminus B)=\A_{c,K}^{2n-p+1}(\C^n\setminus B)\cap d\A_{c}^{2n-p}(\C^n\setminus B).
\]
Posons $\tilde{K}=K_{\nu_0}$. L'application
\begin{equation*}
\begin{split}
{\rm L}_T^K~: \A_{c,K}^{2n-p+1}(\C^n\setminus B)\cap d\A_{c,\tilde{K}}^{2n-p}(\C^n\setminus B)&\to \C \\
d\varphi & \mapsto \langle{\rm T},\varphi\rangle
\end{split}
\end{equation*}
est bien d\'efinie. En effet, si $d\varphi=d\varphi'$, on a $d(\varphi-\varphi')=0$, $\varphi-\varphi'$ est une $(2n-p)$-forme diff\'erentielle, $d$-ferm\'ee \`a support dans $\tilde{K}$, en particulier dans $\C^n\setminus \bar B$.
\vskip 2mm
Par cons\'equent, il existe $\theta\in\A_{c}^{2n-p-1}(\C^n\setminus B)$  tel que $\varphi-\varphi'=d\theta$. Par densit\'e de $\A_{c}^{2n-p-1}(\C^n\setminus\bar B)$ dans $\A_{c}^{2n-p-1}(\C^n\setminus B)$, il existe une suite $(\theta_j)_{j\in\N}$ d'\'el\'ements de $\A_{c}^{2n-p-1}(\C^n\setminus \bar B)$ qui converge uniforment vers $\theta$ dans $\A_{c}^{2n-p-1}(\C^n\setminus B)$ et par cons\'equent 
\[
\langle T,\varphi\rangle=\langle T,\varphi'\rangle+\langle T,d\theta\rangle=\langle T,\varphi'\rangle
\]
car $T$ \'etant $d$-ferm\'e, 
\[
\langle T,d\theta\rangle=\lim_{j\to+\infty}\langle T,d\theta_j\rangle=0.
\]
Donc
\[
{\rm L}_T^K(d\varphi)={\rm L}_T^K(d\varphi').
\]
\vskip 2mm
L'application ${\rm L}_T^K$ est lin\'eaire et aussi continue comme compos\'ee de deux applications continues (et de la dualit\'e entre $\check{\mathcal{D}}_D^p(\C^n)$ et $\A_c^{2n-p}(\C^n\setminus B)$)~:
\[
T~: \A_{c,\tilde{K}}^{2n-p}(\C^n\setminus B)\to \C
\]
et 
\[
\delta~: \A_{c,K}^{2n-p+1}(\C^n\setminus B)\cap d\big[\A_{c,\tilde{K}}^{2n-p}(\C^n\setminus B)\big]\to \A_{c,\tilde{K}}^{2n-p}(\C^n\setminus B)
\]
qui v\'erifie $d\circ \delta={\rm Id}$ et qui est obtenue par application du th\'eor\`eme de l'application ouverte appliqu\'e \`a 
\begin{equation*}
\begin{split}
d~: \{ \varphi\in \A_{c,\tilde{K}}^{2n-p}(\C^n\setminus B)/\,\ d\varphi\in \A_{c,K}^{2n-p+1}(\C^n\setminus B)\}\subset \A_{c,\tilde{K}}^{2n-p}(\C^n\setminus B)\\
 \qquad \qquad \to \A_{c,K}^{2n-p+1}(\C^n\setminus B)\cap d\big[\A_{c,\tilde{K}}^{2n-p}(\C^n\setminus B)\big].
\end{split}
\end{equation*}
 D'apr\`es le th\'eor\`eme de Hahn-Banach,  on peut {\'e}tendre ${\rm L}_T^K$ en un op{\'e}rateur lin{\'e}aire et continu~:
  \[
  \tilde{\rm L}_T^K~: \A_c^{2n-p+1} (\C^n\setminus B) \to\C
\]
qui est lin\'eaire et continu. Donc $\tilde{{\rm L}}_T^K$ est un courant prolongeable d\'efini dans $\C^n\setminus \bar B$ et $d \tilde{\rm L}_T^K = (- 1)^{2n-p} T$ sur $\stackrel{\circ}{K}$ car ${\rm supp}\varphi\subset K$, 
\[
d\varphi \in\A_{c,K}^{2n-p+1}(\C^n\setminus B)
\]
et 
\[
\langle\tilde{{\rm L}}_T^K,d\varphi\rangle=(-1)^{2n-p}\langle T,\varphi\rangle.
\]
On pose $S^{(K)}=(-1)^{2n-p}\tilde{{\rm L}}_T^K.$
\vskip 1mm
\noindent
D'o{\`u}
  $S^{(K)} = (- 1)^{2n-p} \tilde{{\rm L}}_T^K $ est un courant prolongeable solution de $d u = T$ sur $K$.
\vskip 2mm
\textit{\'Etape 2:}
 Soit maintenant $K_1$, $K_2$ et $K_3$ trois compacts d'int\'erieur non vide de $\C^n\setminus B$ tels que $\stackrel{\circ}{K_1}\subset\subset\stackrel{\circ}{K_2}\subset\subset\stackrel{\circ}{K_3}$ et $\stackrel{\circ}{K_i}\cup \bar B=\{z\in\C^n\,\; \mid z \mid<\eta_i\}$, $i=1,2,3$. Soit $T$ un courant prolongeable sur $\C^n\setminus\bar{B}$ tel qu'il existe $S_2$ et $S_3$ deux $p-1$ courants d\'efinis sur $\stackrel{\circ}{K_2}$ et $\stackrel{\circ}{K_3}$ et prolongeables \`a $\C^n$ tels que, pour tout indice $i=2,3$, $d S_i=T$ sur $\stackrel{\circ}{K_i}$ et soit $\epsilon > 0$, alors il existe un courant prolongeable $\tilde{S_3}$ d\'efini sur $\stackrel{\circ}{K_3}$ tel que~: $d\tilde{S_3}=T$ sur $\stackrel{\circ}{K_3}$ et $\tilde{S_3}_{|\stackrel{\circ}{K_1}}=(S_2)_{|\stackrel{\circ}{K_1}}$ si $2\leq p\leq 2n-1.$
\vskip 1mm
 En effet, comme $dS_2=T$ sur $\stackrel{\circ}{K_2}$ et $dS_3=T$ sur $\stackrel{\circ}{K_3}$, $d(S_2-S_3)=0$ sur $\stackrel{\circ}{K_2}$. Puisque sur $\stackrel{\circ}{K_2}$, on peut r\'esoudre le $d$ pour les formes diff\'erentielles \`a support compact dans $\stackrel{\circ}{K_2}\cup\flat B$ de degr\'e $p$ avec $2\leq 2n-p+1\leq  2n-1$ et $d\big[\A_c^{2n-p-1}(\stackrel{\circ}{K_2}\cup\flat B)\big]$ est ferm\'e dans $\A_c^{2n-p-1}(\stackrel{\circ}{K_2}\cup\flat B)$,  on a d'apr\`es l'\'etape 1 et pour $K$ un compact tel que $\stackrel{\circ}{K_1}\subset\subset K\subset\subset \stackrel{\circ}{K_2}$ un courant $S^{(K)}$ sur $\stackrel{\circ}{K}$ prolongeable \`a $\stackrel{\circ}{K_2}\cup\bar B$ tel que $S_2-S_3=dS^{(K)}$ sur $\stackrel{\circ}{K}$. 
 \vskip 1mm
 Soient $\chi$ une fonction dans $\CC^\infty(\C^n)$ \`a support compact dans $\stackrel{\circ}{K}\cup \bar{B}$ qui vaut $\bf 1$ dans $K_1$ et $\tilde{S}^{(K)}$ une extension de $S^{(K)}$ \`a $\C^n$ 
 \[
 S_3+d(\chi\tilde{S}^{(K)})=S_2-d\big((1-\chi)\tilde{S}^{(K)}\big) \mbox{ sur } \stackrel{\circ}{K_1}.
 \]
 On pose
 \[
 \tilde{S}_3=S_3+d(\chi\tilde{S}^{(K)}).
 \]
\vskip 2mm
 \textit{\'Etape 3:}  
 Consid\'erons une suite exhaustive $(K_j)_{j\in\N}$ de compacts de $\C^n\setminus B$. Supposons que $\stackrel{\circ}{K_j}\cup\bar B=\{z\in\C^n\,\: \mid z \mid<\eta_j\}$ o\`u $(\eta_j)_{j\in\N}$ sont des r\'eels tels que $\eta_j <\eta_{j+1}$. Pour $2\leq p\leq 2n-1$, on associe \`a $(K_j)_{j\in \N}$ gr\^ace aux \'etapes 1 et 2 une suite de courants $(S_j)_{j\in\N}$ d\'efinis dans $K_j$ et prolongeables \`a $\C^n$ telle que $d S_j=T$ sur $\stackrel{\circ}{K_j}$ et si $j$, $j+1$, $j+2$ sont trois indices cons\'ecutifs, $S_{j+2}=S_{j+1}$ sur $\stackrel{\circ}{K_j}$.
 \vskip 1mm
 \noindent
 La suite $(S_j)_{j\in N}$ converge vers un courant $S$ d\'efini sur $\C^n\setminus \bar{B}$ et prolongeable. De plus, $S$ est solution de l'\'equation $du=T$ dans $\C^n\setminus\bar{B}$.
 \end{proof}
\section{R\'esolution du $\partial\bar\partial$ pour les courants prolongeables}
Tenant compte du th{\'e}or{\`e}me \ref{thm1} et des r{\'e}sultats de r{\'e}solution du
$\bar{\partial}$ pour les courants prolongeables obtenus par S. Sambou dans \cite{Samb}, on a le th{\'e}or{\`e}me suivant~:
\begin{theorem}\label{thm3}$~~ $\\
Soit $T$ un $(p,q)$-courant prolongeable d{\'e}fini sur $\C^n\setminus \bar{B}$. Supposons que $d T = 0$; $1\leqslant p \leqslant n$ et $1 \leqslant q \leqslant n$, alors il existe un
$(p-1,q-1)$-courant $S$ d{\'e}fini sur $\C^n\setminus \bar{B}$, prolongeable tel que
$\partial \bar{\partial} S = T$, pour $2 \leqslant p + q \leqslant 2n-1$.
\end{theorem}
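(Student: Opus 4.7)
\medskip
\noindent\textbf{Esquisse de preuve (plan).} L'approche consiste \`a combiner le Th\'eor\`eme \ref{thm1} (r\'esolution de $du=T$ pour les courants prolongeables) avec la r\'esolution du $\bar\partial$ pour les courants prolongeables \'etablie par Sambou dans \cite{Samb}, selon le sch\'ema classique du lemme $\partial\bar\partial$. Puisque $dT=0$ et $2\leq p+q\leq 2n-1$, le Th\'eor\`eme \ref{thm1} fournira un $(p+q-1)$-courant prolongeable $u$ sur $\C^n\setminus\bar{B}$ tel que $du=T$~; le cas fronti\`ere $p+q=2n-1$, qui sort l\'eg\`erement de la plage couverte par le Th\'eor\`eme \ref{thm1}, se traitera s\'epar\'ement en appliquant directement la r\'esolution du $\bar\partial$ de \cite{Samb} \`a $T$ lui-m\^eme, $T$ \'etant automatiquement $\partial$- et $\bar\partial$-ferm\'e dans ce cas extr\'emal.

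On d\'ecomposera ensuite $u=\sum_{r+s=p+q-1}u^{r,s}$ suivant les bidegr\'es, chaque composante $u^{r,s}$ restant prolongeable. $T$ \'etant pur de bidegr\'e $(p,q)$, la confrontation des bidegr\'es dans $du=T$ donnera
\[
\partial u^{p-1,q}+\bar\partial u^{p,q-1}=T
\]
ainsi que les relations de compatibilit\'e $\partial u^{r-1,s+1}+\bar\partial u^{r,s}=0$ pour les autres bidegr\'es. En posant $v=u^{p,q-1}$ et $w=u^{p-1,q}$, on utilisera it\'erativement la r\'esolution du $\bar\partial$ (et par conjugaison du $\partial$) pour les courants prolongeables de \cite{Samb} le long des cha\^\i nes fournies par ces relations, afin de modifier $v$ et $w$ par des termes exacts et de se ramener au cas o\`u $\partial v=0$ et $\bar\partial w=0$, l'\'egalit\'e $\partial w+\bar\partial v=T$ restant alors pr\'eserv\'ee.

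Le $(p-1,q)$-courant prolongeable $w$ \'etant $\bar\partial$-ferm\'e, \cite{Samb} fournira un $(p-1,q-1)$-courant prolongeable $S_1$ avec $\bar\partial S_1=w$~; de m\^eme, par conjugaison, on obtiendra un $(p-1,q-1)$-courant prolongeable $S_2$ v\'erifiant $\partial S_2=v$. Il suffira alors de poser $S=S_1-S_2$~: l'identit\'e $\partial\bar\partial=-\bar\partial\partial$ donne
\[
\partial\bar\partial S=\partial\bar\partial S_1-\partial\bar\partial S_2=\partial w+\bar\partial\partial S_2=\partial w+\bar\partial v=T.
\]
Le principal obstacle sera la phase de correction qui rend $v$ respectivement $\partial$-ferm\'e et $w$ respectivement $\bar\partial$-ferm\'e~: il faudra v\'erifier soigneusement que tous les bidegr\'es auxiliaires intervenant dans les cha\^\i nes de compatibilit\'e restent dans la plage de validit\'e des th\'eor\`emes de r\'esolution de \cite{Samb} et que les corrections construites demeurent prolongeables \`a $\C^n$.
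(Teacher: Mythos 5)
Votre plan suit pour l'essentiel la route du papier~: on r\'esout $du=T$ gr\^ace au Th\'eor\`eme \ref{thm1}, on d\'ecompose la solution suivant les bidegr\'es, puis on applique la r\'esolution du $\bar\partial$ (et, par conjugaison, du $\partial$) de \cite{Samb} pour \'ecrire les composantes pertinentes sous la forme $\bar\partial S_1$ et $\partial S_2$, et l'on pose $S=S_1-S_2$. Vous \^etes cependant plus soigneux sur deux points o\`u le texte original va vite, et cette prudence est justifi\'ee. D'une part, le papier ne retient de $\mu$ que les deux composantes $\mu_1$ de bidegr\'e $(p-1,q)$ et $\mu_2$ de bidegr\'e $(p,q-1)$ et en d\'eduit imm\'ediatement $\bar\partial\mu_1=0$ et $\partial\mu_2=0$~; en toute rigueur, un $(p+q-1)$-courant a des composantes dans tous les bidegr\'es $(r,s)$ avec $r+s=p+q-1$, et l'\'equation $du=T$ ne donne que $\bar\partial u^{p-1,q}=-\partial u^{p-2,q+1}$, etc. Votre phase de correction en zigzag, qui remonte les relations de compatibilit\'e depuis les bidegr\'es extr\^emes (o\`u $\bar\partial u^{0,p+q-1}=0$ et $\partial u^{p+q-1,0}=0$ sont automatiques), est exactement ce qu'il faut pour combler ce point~; il reste, comme vous le signalez vous-m\^eme, \`a v\'erifier que chaque bidegr\'e auxiliaire tombe dans la plage de validit\'e de \cite{Samb}, v\'erification que le papier ne fait pas non plus. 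D'autre part, vous relevez \`a juste titre que le Th\'eor\`eme \ref{thm1} n'est \'enonc\'e que pour $2\leq j\leq 2n-2$ alors que le Th\'eor\`eme \ref{thm3} autorise $p+q=2n-1$~; le papier l'applique pourtant en degr\'e $p+q$ sans commentaire, et votre traitement s\'epar\'e du cas extr\'emal vise \`a corriger cette lacune r\'eelle (notez toutefois que $\partial T=0$ et $\bar\partial T=0$ d\'ecoulent de $dT=0$ pour tout courant de bidegr\'e pur, pas seulement dans le cas extr\'emal, et que passer de $\bar\partial v=T$ \`a $\partial\bar\partial S=T$ demandera encore une correction du m\^eme type). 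En somme~: m\^eme strat\'egie que le papier, ex\'ecut\'ee avec davantage de rigueur~; il ne manque que la mise en place d\'etaill\'ee du zigzag et le contr\^ole des degr\'es pour l'application de \cite{Samb}.
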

\begin{proof}
  Soit $T$ un $(p, q)$-courant, $1 \leqslant p \leqslant n$ et $1 \leqslant
  q \leqslant n$, $d$-ferm{\'e} d{\'e}fini sur $\C^n\setminus \bar{B}$ et prolongeable
  avec $2 \leqslant p + q \leqslant 2n-1$.
  \vskip 1mm
  \noindent
 Puisque le th{\'e}or{\`e}me \ref{thm1} nous assure que $\check{{\rm H}}^{p + q} (\C^n\setminus \bar{B})=0$, il existe un courant prolongeable $\mu$ d{\'e}fini sur $\C^n\setminus \bar{B}$ tel que
  $d \mu = T$. $\mu$ est un $(p + q - 1)$-courant, il se d{\'e}compose en un $(p - 1,
  q)$-courant $\mu_1$ et en un $(p, q - 1)$-courant $\mu_2$. On a
  \[
  d \mu = d (\mu_1 +
  \mu_2) = d \mu_1 + d \mu_2 = T.
  \]
Comme $d=\partial+\bar\partial$, on a, pour des raisons de bidegr\'e, $\partial\mu_2=0$ et $\bar\partial\mu_1=0$. On obtient $\mu_1=\bar\partial u_1$ et $\mu_2=\partial u_2$ avec $u_1$ et $u_2$ des courants prolongeables d\'efinis sur $\C^n\setminus \bar{B}$, (voir \cite{Samb}, section 3).
\vskip 1mm
  On a :
\begin{eqnarray*}
T &= &\partial \mu_1  + \bar{\partial} \mu_2\\
&=&\partial \bar{\partial} u_1 + \bar{\partial}
  \partial u_2\\
  &=&\partial \bar{\partial} (u_1 - u_2)
\end{eqnarray*}
   Posons $S = u_1 - u_2$ , $S$ est un $(p - 1, q - 1)$-courant prolongeable
  d{\'e}fini sur $\C^n\setminus \bar{B}$ tel que $\partial \bar{\partial} S = T$.
\end{proof}

\section{R\'esolution du $\partial\bar\partial$ sur une vari\'et\'e analytique complexe}

On va maintenant consid\'erer $X$ comme une vari\'et\'e diff\'erentiable de dimension $n$.
\begin{definition}
\end{definition}
Soit $X$ une vari\'et\'e diff\'erentiable de dimension $n$ et $\omega\subset X$ un domaine contractile. On dit que $X$ est une extension contractile de $\Omega$, s'il existe une suite $\left(\Omega_{n} \right)_{n} $ exhaustive de domaines contractiles telle que $$ \forall n, \Omega\subset\subset \Omega_{n}\subset\subset X. $$
\begin{example}
\end{example}
Quand $X=\mathbb{C}^{n}$, alors $\mathbb{C}^{n}$ est une extension contractile de la boule unit\'e $B$.\\
\\
On a pour les extensions contractiles, le th\'eor\`eme suivant:
\begin{theorem}\label{thm4}$~~ $\\
Soit $X$ une vari\'et\'e analytique complexe de dimension $n$ et soit $D\subset\subset X$ un domaine contractile fortement pseudoconvexe. Supposons que $X$ est une extension $(n-1)$-convexe avec $ H^{j}(\flat D)=0 \quad 2\leqslant j\leqslant 2n-2$ de $D$ et une extension contractile de $D$. Posons $\Omega=X\setminus\bar{D}$. Si $\stackrel{\circ}{\bar{\Omega}}=\Omega$, alors pour tout $(p,q)$ courant $T$ d\'efini sur $\Omega$, $d$-ferm\'e et prolongeable, il existe un $(p-1,q-1)$ courant $S$ d\'efini sur $\Omega$ et prolongeable tel que $\partial\bar{\partial}S=T$ pour $1\leqslant p\leqslant n-1$ et $1\leqslant q\leqslant n-1$.
\end{theorem}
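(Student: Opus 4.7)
The plan is to mirror the three-step argument of Théorème~\ref{thm1}, replacing $\C^n\setminus\bar B$ by $\Omega=X\setminus\bar D$, and then to combine the resulting vanishing of de~Rham cohomology for courants prolongeables with the $\bar\partial$-résolution available from \cite{Samb} under the $(n-1)$-convexity hypothesis. Concretely, I would first establish $\check{{\rm H}}^{p+q}(\Omega)=0$ for $2\leqslant p+q\leqslant 2n-2$, then solve $d\mu=T$ with $\mu$ prolongeable sur $\Omega$, decompose $\mu$ by bidegrees, and conclude exactly as in Théorème~\ref{thm3}.

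The first technical step is the manifold analog of Lemme~\ref{lem1}: every $d$-fermée $p$-forme à support compact dans $X\setminus D$ est exacte dans cet espace, pour $2\leqslant p\leqslant 2n-1$. To obtain it, I would first note that the contractile extension hypothesis furnishes a nested exhaustion $(\Omega_n)$ of $X$ by domains contractiles; since each inclusion $\Omega_n\hookrightarrow\Omega_{n+1}$ lands in a contractile target it is homotopiquement triviale, so Milnor's mapping-telescope argument makes $X$ itself contractile. Poincaré duality on the oriented $2n$-manifold $X$ then yields ${\rm H}_c^p(X)=0$ pour $1\leqslant p\leqslant 2n-1$, after which the proof of Lemme~\ref{lem1} transcribes verbatim: solve $du=f$ with $u\in\A_c^{p-1}(X)$, use $D$ contractile to write $u|_D=dv$ with $v\in\A^{p-2}(\bar D)$, and subtract $d\tilde v$ for a prolongement à support compact $\tilde v$ de $v$.

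Next, I would prove $\check{{\rm H}}^j(\Omega)=0$ pour $2\leqslant j\leqslant 2n-2$. The Mayer-Vietoris short exact sequence
\[
0\to\A^p(X)\to\A^p(\bar D)\oplus\A^p(X\setminus D)\to\A^p(\flat D)\to 0
\]
and the vanishings ${\rm H}^j(X)={\rm H}^j(\bar D)=0$ (contractilités) together with l'hypothèse ${\rm H}^j(\flat D)=0$ dans ce range kill the classical cohomology ${\rm H}^j(\Omega)$. To upgrade to courants prolongeables, I would rerun the three-step scheme of Théorème~\ref{thm1}: on each compact $K\subset\Omega$ the closedness of $d\A_c^{2n-j}(X\setminus D)$ dans $\A_c^{2n-j+1}(X\setminus D)$ --- a consequence of the preceding lemma and the Fréchet-Baire / application ouverte arguments recalled in \cite{Samb} --- lets me define the forme linéaire continue ${\rm L}_T^K(d\varphi)=\langle T,\varphi\rangle$, extend it via Hahn-Banach to un courant prolongeable $S^{(K)}$ avec $dS^{(K)}=T$ sur $\stackrel{\circ}{K}$, and glue these local primitives on an exhaustion of $\Omega$ via the $\chi\tilde S^{(K)}$ patching of Étape~2 and Étape~3.

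Once $\check{{\rm H}}^{p+q}(\Omega)=0$ is in hand, the conclusion parallels Théorème~\ref{thm3}: I write $T=d\mu$ with $\mu$ prolongeable, split $\mu=\mu_1+\mu_2$ by bidegrees $(p-1,q)$ and $(p,q-1)$, deduce $\bar\partial\mu_1=0$ et $\partial\mu_2=0$ par bidegré, invoke the $\bar\partial$-résolution pour courants prolongeables sur $\Omega$ from \cite{Samb} --- whose hypotheses are supplied by the $(n-1)$-convexité of the extension $D\subset\subset X$ and by $\stackrel{\circ}{\bar\Omega}=\Omega$ --- to write $\mu_1=\bar\partial u_1$ and $\mu_2=\partial u_2$, and take $S=u_1-u_2$. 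The hard part will be the closedness / application ouverte step in the manifold setting: in $\C^n$ it rested on standard Baire estimates on Fréchet spaces of formes à support compact, whereas on $X$ one must combine the geometric hypotheses ($(n-1)$-convexité, exhaustion contractile, annulation de ${\rm H}^\ast(\flat D)$) with the results of \cite{Samb} to guarantee that $d\A_c^\bullet(X\setminus D)$ remains fermé dans la topologie de Fréchet pertinente.
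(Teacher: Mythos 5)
Your proposal follows essentially the same route as the paper's proof: a lemma on compactly supported $d$-exactness obtained from the contractile exhaustion together with Poincar\'e duality, then the three-step Hahn--Banach and gluing argument giving $\check{{\rm H}}^{p+q}(\Omega)=0$, and finally the bidegree decomposition combined with the $\bar\partial$-resolution of \cite{Samb}, exactly as in the paper's \'Etape 4. The only cosmetic difference is that you first make $X$ itself contractile via a telescope argument before dualizing, whereas the paper applies Poincar\'e duality directly to the contractile piece $D_{\nu_0}$ of the exhaustion that contains the support of the given form; the two versions of the lemma are interchangeable.
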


Pour d\'emontrer le th\'eor\`eme \ref{thm4}, nous avons besoin du lemme suivant~:
\begin{lemma}
\[
\A_c^r(\bar\Omega)\cap\ker d=d\big(\A_c^{r-1}(\bar\Omega)\big) \mbox{ pour } 1\leq r \leq 2n-1.
\]
\end{lemma}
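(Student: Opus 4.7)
The plan is to follow the strategy of Lemma \ref{lem1} step by step, replacing $\C^n$ by $X$ and the ball $B$ by the contractile strongly pseudoconvex domain $D$. The entire argument will rest on two cohomology-vanishing facts: $H_c^r(X) = 0$ for $1 \le r \le 2n - 1$, and $H^{r-1}(\bar D) = 0$ for $r \ge 2$. The first will come from the contractile-extension hypothesis: the exhaustion $(\Omega_n)_n$ by contractile open domains exhibits $X$ as a smooth increasing union of contractibles, hence $X$ is itself contractible; Poincar\'e duality on the oriented real $2n$-manifold $X$ then gives the compactly-supported vanishing in the stated range. The second vanishing is immediate from the contractility of $D$ (and of $\bar D$, which deformation-retracts onto any interior point).

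With these in hand, start from $f \in \A_c^r(\bar\Omega) \cap \ker d$ with $2 \le r \le 2n - 1$. First, view $f$ as an element of $\A_c^r(X) \cap \ker d$: this is legitimate because the support of $f$ lies in $\bar\Omega = X \setminus D$ and $f$ is already smooth on $X$ by definition of $\A_c^r(\bar\Omega)$. The first vanishing produces $u \in \A_c^{r-1}(X)$ with $du = f$. Since $\mathrm{supp}\, f \cap D = \emptyset$, the restriction $u|_{\bar D}$ is $d$-closed, and the second vanishing yields $v \in \A^{r-2}(\bar D)$ with $dv = u|_{\bar D}$. Extend $v$ via a smooth cutoff near the compact set $\bar D$ to $\tilde v \in \A_c^{r-2}(X)$, and set $\hat u := u - d\tilde v$. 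Then $d\hat u = du = f$ and $\hat u|_{\bar D} = u|_{\bar D} - dv|_{\bar D} = 0$, so $\mathrm{supp}\, \hat u \subset X \setminus D = \bar\Omega$; thus $\hat u \in \A_c^{r-1}(\bar\Omega)$ is the desired primitive.

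The main obstacle is justifying $H_c^r(X) = 0$ from the stated hypotheses alone. A more concrete route, which avoids Poincar\'e duality, is to use compactness of $\mathrm{supp}\, f$ to place it inside some $\Omega_N$ of the contractile exhaustion, solve $du = f$ on the contractible non-compact open submanifold $\Omega_N$ with $u$ compactly supported in $\Omega_N$ (via $H_c^r(\Omega_N) = 0$ for $1 \le r \le 2n-1$), and extend $u$ by zero to $X$. Finally, for $r = 1$ the analogous bump-function obstruction to the one that excluded $p = 1$ in Lemma \ref{lem1} is present --- if $\phi$ is a smooth compactly supported function on $X$ equal to $1$ on $\bar D$ then $f = d\phi$ belongs to $\A_c^1(\bar\Omega) \cap \ker d$ but cannot lie in $d\A_c^0(\bar\Omega)$, since the primitive would have to equal $\phi$ up to a global constant --- so the stated range $1 \le r \le 2n - 1$ should almost certainly read $2 \le r \le 2n - 1$.
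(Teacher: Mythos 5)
Your argument is essentially the paper's own: place $\mathrm{supp}\,f$ inside a contractible member $D_{\nu_0}$ (your $\Omega_N$) of the exhaustion, solve $du=f$ there with compact support via $H_c^r(D_{\nu_0})=0$ (which the paper, like your first route, obtains from contractibility plus Poincar\'e duality), and then correct $u$ on $\bar D$ by subtracting $d\tilde v$ where $v$ is a primitive of $u|_{\bar D}$ furnished by the contractibility of $D$ --- so the two proofs coincide step for step. Your closing remark is moreover a genuine (and correct) emendation of the statement: the paper claims the range $1\leq r\leq 2n-1$ but its proof of the case $r=1$ stops at ``$g$ est une constante sur $D$'' without producing a compactly supported primitive in $\bar\Omega$, and your bump-function example $f=d\phi$ with $\phi\equiv 1$ near $\bar D$ shows that $r=1$ must indeed be excluded (for $X$ non compact), in accordance with the restriction $2\leq p$ already imposed in the analogous Lemme \ref{lem1}.
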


\begin{proof}$~~ $\\
On a $X=\cup D_{\nu} \quad , D\subset\subset D_{\nu}\subset\subset X$ et $D_{\nu}$ est contractile.\\
Si $f\in\A_c^r(\bar{\Omega)}\cap\ker d \quad \exists \nu_{0}\in \mathbb{N}$ tel que $f\in \A_c^r(D_{\nu_{0}})\cap \ker d$. Or $ H^{j}(D_{\nu_{0}})=0 ,\quad \textrm{pour} j\geq 1 $. Par dualit\'e de Poincar\'e $H^{j}_{c}(D_{\nu_{0}})=0 \quad \textrm{pour} j<2n$. Il existe $g\in \A_c^{r-1}(D_{\nu_{0}})$, donc $g\in \A_c^{r-1}(X)$ telle que $dg=f$.\\
$dg_{\mid D}=0$, si $r=1$ alors $g$ est une constante sur $D$. Si $r>1$, il existe $u$ une $(r-2)$ forme diff\'erentielle sur $\bar{D}$ telle que $ g_{\mid D}=du $. Soit $\tilde{u}$ une extension de $u$ \`a support compact dans $X$, $h=g-d\tilde{u}$ convient.
\end{proof}

Nous pouvons \'etablir donc la preuve du th\'eor\`eme \ref{thm4}
\begin{proof}[D\'emonstration du th\'eor\`eme \ref{thm4}] $~~ $\\
\vskip 1mm
Soit une suite exhaustive de compacts $K_{\nu}$ de $\Omega$. $$ K_{\nu}=\bar{D_{\nu}}\setminus \bar{D} $$
et quelque soit $\nu$, $K_\nu$ est un compact d'int\'erieur non vide. 
\vskip 1mm
\noindent
Pour $K$ un compact de $\Omega$, l'op\'erateur ${\rm L}_T^K$ est bien d\'efini, lin\'eaire et continu, cf. \'etape 1 de la d\'emonstrations du th\'eor\`eme \ref{thm1}.
\vskip 2mm
\textit{\'Etape 1:}\\
 D'apr\`es le th\'eor\`eme de Hahn-Banach,  on peut {\'e}tendre ${\rm L}_T^K$ en un op{\'e}rateur lin{\'e}aire et continu~:
  \[
  \tilde{\rm L}_T^K~: \A_c^{2n-p+1} (\Omega) \to\C
\]
qui est lin\'eaire et continu. Donc $\tilde{{\rm L}}_T^K$ est un courant prolongeable d\'efini dans $ \bar\Omega$ et $ d\tilde{\rm L}_T^K = (- 1)^{2n-p} T$ sur $\stackrel{\circ}{K}$ car ${\rm supp}\varphi\subset K$, 
\[
d\varphi \in\A_{c,K}^{2n-p+1}(\Omega)
\]
et 
\[
\langle\tilde{{\rm L}}_T^K,d\varphi\rangle=(-1)^{2n-p}\langle T,\varphi\rangle.
\]
On pose $S^{(K)}=(-1)^{2n-p}\tilde{{\rm L}}_T^K.$
\vskip 1mm
\noindent
D'o{\`u}
  $S^{(K)} = (- 1)^{2n-p} \tilde{{\rm L}}_T^K $ est un courant prolongeable solution de $d u = T$ sur $K$.
\vskip 2mm
\textit{\'Etape 2:}\\
 Soit maintenant $K_1$, $K_2$ et $K_3$ trois compacts d'int\'erieur non vide de $\Omega$ tels que $\stackrel{\circ}{K_1}\subset\subset\stackrel{\circ}{K_2}\subset\subset\stackrel{\circ}{K_3}$ et $\stackrel{\circ}{K_i}\cup \bar D=\{z\in X\,\ ; |z|<\eta_i\}$, $i=1,2,3$. Soit $T$ un courant prolongeable sur $\bar{\Omega}$ tel qu'il existe $S_2$ et $S_3$ deux $(p-1)$ courants d\'efinis sur $\stackrel{\circ}{K_2}$ et $\stackrel{\circ}{K_3}$ et prolongeables \`a $X$ tels que, pour tout indice $i=2,3$, $d S_i=T$ sur $\stackrel{\circ}{K_i}$, alors il existe un courant prolongeable $\tilde{S_3}$ d\'efini sur $\stackrel{\circ}{K_3}$ tel que~: $d\tilde{S_3}=T$ sur $\stackrel{\circ}{K_3}$ et  $\tilde{S_3}_{|\stackrel{\circ}{K_1}}=(S_2)_{|\stackrel{\circ}{K_1}}$ si $2\leq p.$
 \vskip 1mm
 En effet, comme $d S_2=T$ sur $\stackrel{\circ}{K_2}$ et $d S_3=T$ sur $\stackrel{\circ}{K_3}$, $d(S_2-S_3)=0$ sur $\stackrel{\circ}{K_2}$. Puisque sur $\stackrel{\circ}{K_2}$, on peut r\'esoudre le $d$ pour les formes diff\'erentielles \`a support compact dans $\stackrel{\circ}{K_2}\cup\,\ \flat D$ de degr\'e $p$ avec $2\leq 2n-p+1\leq 2n-1$ et $d\big[\A_c^{2n-p-1}(\stackrel{\circ}{K_2}\cup\,\ \flat D)\big]$ est ferm\'e dans $\A_c^{2n-p-1}(\stackrel{\circ}{K_2}\cup\,\ \flat D)$, on a d'apr\`es l'\'etape 1 et pour $K$ un compact tel que $\stackrel{\circ}{K_1}\subset\subset K\subset\subset \stackrel{\circ}{K_2}$ un courant $S^{(K)}$ sur $\stackrel{\circ}{K}$ prolongeable \`a $\stackrel{\circ}{K_2}\cup\bar D$ tel que $S_2-S_3=d S^{(K)}$ sur $\stackrel{\circ}{K}$. 
 \vskip 1mm
 Soient $\chi$ une fonction de classe $\CC^\infty$ \`a support compact dans $\stackrel{\circ}{K}\cup \bar{D}$ qui vaut $1$ dans $K_1$ et $\tilde{S}^{(K)}$ une extension de $S^{(K)}$ \`a $X$ 
 \[
 S_3+d(\chi\tilde{S}^{(K)}=S_2-d\big((1-\chi)\tilde{S}^{(K)}\big) \mbox{ sur } \stackrel{\circ}{K_1}.
 \]
 On pose
 \[
 \tilde{S}_3=S_3+d(\chi\tilde{S}^{(K)}).
 \]
\vskip 2mm
 \textit{\'Etape 3:} \\ 
 Consid\'erons une suite exhaustive $(K_j)_{j\in\N}$ de compacts de $\Omega$. Supposons que $\stackrel{\circ}{K_j}\cup\bar D=\{z\in X\,\: \mid z \mid<\eta_j\}$ o\`u $(\eta_j)_{j\in\N}$ sont des r\'eels tels que $\eta_j <\eta_{j+1}$. Pour $2\leq  p$, on associe \`a $(K_j)_{j\in \N}$ gr\^ace aux \'etapes 1 et 2 une suite de courants $(S_j)_{j\in\N}$ d\'efinis dans $K_j$ et prolongeables \`a $X$ telle que $d S_j=T$ sur $\stackrel{\circ}{K_j}$ et si $j$, $j+1$, $j+2$ sont trois indices cons\'ecutifs, $S_{j+2}=S_{j+1}$ sur $\stackrel{\circ}{K_j}$.
 \vskip 1mm
 \noindent
 La suite $(S_j)_{j\in N}$ converge vers un courant $S$ d\'efini sur $\Omega $ et prolongeable \`a $X$. De plus, $S$ est solution de l'\'equation $d u=T$ dans $\Omega$.
 \vskip 2mm
 \textit{\'Etape 4 :} \\
 Soit $T$ un $(p, q)$-courant, $1 \leqslant p \leqslant n$ et $1 \leqslant
  q \leqslant n$, $d$-ferm{\'e} d{\'e}fini sur $\Omega$ et prolongeable
  avec $2 \leqslant p + q \leqslant 2n-2$.
  \vskip 1mm
  \noindent
 Puisque le th{\'e}or{\`e}me \ref{thm1} nous assure que $\check{{\rm H}}^{p + q} (\Omega)=0$, il existe un courant prolongeable $\mu$ d{\'e}fini sur $\Omega$ tel que
  $d \mu = T$. $\mu$ est un $(p + q - 1)$-courant, il se d{\'e}compose en un $(p - 1,
  q)$-courant $\mu_1$ et en un $(p, q - 1)$-courant $\mu_2$. On a
  \[
  d \mu = d (\mu_1 +
  \mu_2) = d \mu_1 + d \mu_2 = T.
  \]
Comme $d=\partial+\bar\partial$, on a, pour des raisons de bidegr\'e, $\partial\mu_2=0$ et $\bar\partial\mu_1=0$. On obtient $\mu_1=\bar\partial u_1$ et $\mu_2=\partial u_2$ avec $u_1$ et $u_2$ des courants prolongeables d\'efinis sur $\Omega$, (voir \cite{Samb}, section 3).
\vskip 1mm
  On a~:
\begin{eqnarray*}
T &= &\partial \mu_1  + \bar{\partial} \mu_2\\
&=&\partial \bar{\partial} u_1 + \bar{\partial}
  \partial u_2\\
  &=&\partial \bar{\partial} (u_1 - u_2)
\end{eqnarray*}
   Posons $S = u_1 - u_2$ , $S$ est un $(p - 1, q - 1)$-courant prolongeable
  d{\'e}fini sur $\Omega$ tel que $\partial \bar{\partial} S = T$.
 
\end{proof}

\end{document}